\documentclass{article}
\usepackage{graphicx} 
\usepackage{amsfonts, amsmath,amssymb, amsthm}
\usepackage{todonotes}
\usepackage{authblk}
\newtheorem{theorem}{Theorem}
\newtheorem{lemma}{Lemma}
\usepackage{comment}
\usepackage{natbib}
\usepackage{url}
\usepackage{hyperref}

\usepackage[margin = 1in]{geometry}

\title{Myopic Best Response Replicates the Mean Curvature Flow}

\author[1,2]{John S. McAlister\footnote{Corresponding Author - jm9097@princeton.edu}}
\author[3]{Nathan Burns}

\affil[1]{Princeton University, Program in Applied and Computational Mathematics}
\affil[2]{NSF Center for Analysis and Prediction of Pandemic Expansion (APPEX)}
\affil[3]{University of Tennessee - Knoxville, Department of Mathematics}

\date{}
\begin{document}

\maketitle
\begin{abstract}
    In a continuous coordination game, under myopic best response, players change strategies, and thus strategic communities change their shapes in time. We show that the boundaries of these strategic communities evolve according to Free Boundary Mean Curvature Flow in the continuous time limit. The convergence relies on an approximation scheme for the Mean Curvature Flow, which has previously been used to describe other threshold dynamics. With this equivalence, we can characterize Nash equilibria of the continuous coordination game as minimal surfaces and explore related consequences in biased versions of the same game. 

\end{abstract}

\section{Introduction}\label{sec:introduction}
    In a coordination game, players receive a higher payoff for playing the same strategy as their coplayer(s). This is called the bandwagon property \cite{Kandori1998,cui2022}, and this class of games has been studied for several decades in many different disciplines. In every coordination game, the strategy profile in which all players play the same strategy (regardless of what that strategy is) is a Nash equilibrium. This Nash equilibrium is called the consensus equilibrium \cite{Kandori1993, Ellison1993}. When every player interacts with every other player evenly, this is the only Nash equilibrium, but when the relational structure is not completely connected, there may be Nash equilibria which are not consensus equilibria.  
    
    The case where there is an explicit relational structure among players is called a ``structured coordination game," and it has been studied in discrete settings by many for several decades (e.g., \cite{Ellison1993,Ellison2000,Buskens2016,Oechssler1997,Weidenholzer,Robson1995,Oechssler1999}). These studies have relied on reducing the state space through some symmetric reduction to get analytically tractable results or using simulation to make observations when such a reduction is not possible. Although general analytical results are difficult to find, there are many types of relational structures which are known to only admit consensus equilibria (i.e., $K_n$, $K_{n,m}$ for $n,m$ coprime, $C_n$, $P_n$, and many lattices) \cite{McAlister2024simulation,Ellison1993}. 
    
    The more difficult task is to be able to predict what non-consensus equilibria look like from a general relational structure. It is obvious that finding equilibria is equivalent to locally minimizing the number of edges connecting different players not using the same strategies. In \cite{McAlister2026}, this is shown to have a geometric intuition of minimizing the boundary of the strategic communities (which can be described as a graphical object in the dual sense). This purely geometric description of the game in the discrete setting suggests an analogous relationship between coordination and minimal surfaces in the continuous setting. 

    Interestingly, when we consider this game in the continuous player space, the dynamics of the game become easier to understand, at least in the limit where the radius of interaction for each player is vanishingly small relative to the domain. In this manuscript, we will describe this continuous extension rigorously and show that, if time and space are scaled appropriately, the limit of a Myopic Best Response (MBR) process for this coordination game converges to a limit, and that limit is equivalent to the Free Boundary Mean Curvature Flow (MCF) of the initial strategic boundaries. 

    In section \ref{sec:modelBackground} we give necessary background on the coordination game and on the mean curvature flow. In section \ref{sec:MainResult} we give the main result, that MBR replicates MCF, first in the Cauchy setting, then again in the free boundary setting. We finally explore the implications of the main result and some extensions in section \ref{sec:Implications}.
    
\section{Background}\label{sec:modelBackground}
\subsection{Coordination}\label{subsec:coordination}
    In order to understand this relationship, we must first describe the continuous coordination game in detail. We begin with a standard discrete coordination game where players are vertices $v\in V$ in a graph $G$ which is described by the weighted adjacency matrix $W$. The weights in $W$ describe how frequently one player interacts with another. Each player has an associated strategy from the set of strategies $C$. In the most basic version of the coordination game, where the payoff matrix is the identity matrix, $I_m$, where $m=|C|$, the payoff of any single pairwise interaction is 1 if the two players play the same strategy and 0 otherwise. In the multiplayer game, we say that the payoff is the weighted sum of the payoffs from all possible pairwise interactions. Thus, for a strategy profile $u:V\to C$, the payoff of player $v$ playing strategy $i$ against the strategy profile $u$ is given as
    \begin{equation}\label{eq:DiscretePayoff}w(v,i|u)=\sum_{w\in V}W_{v,w}\delta (i,u(w))\end{equation}
    where $\delta(a,b)=1$ if $a=b$ and $0$ otherwise.

    From this, the extension into continuous space is clear. Now every player is a point in some open domain $\Omega \subseteq \mathbb{R}^n$, so a strategy profile $u$ is a function from $\Omega$ to the set of strategies, $C$, and the weighted adjacency matrix is replaced by a kernel $K(x,y)$ which determines the frequency of interaction between $x$ and $y$. Making these substitutions in \eqref{eq:DiscretePayoff} and replacing the sum with an integral, we get the new payoff function 
    \begin{equation}\label{eq:ContPayoff}w(x,i|u)=\int_\Omega K(x,y)\chi_i(y)dy\end{equation}
    where $\chi_i(y)$ is the characteristic function for the set $\{u(y)=i;y\in \Omega\}.$ 

    This fully characterizes the game. Each player in the set of players, $\Omega$, takes on a strategy from the strategy set $C$ and their payoffs are determined by \eqref{eq:ContPayoff}. The mapping from players to strategies is called a strategy profile, and if no player can improve their payoff by changing strategies, a strategy profile is a Nash equilibrium.  More specifically, because the game is non-atomic \cite{Aumann1964,Schmeidler1973}, (each player is of measure 0 and has no impact on the payoffs of other players), we use the solution concept of an almost everywhere Nash equilibrium, where the Nash condition is satisfied almost everywhere. It is obvious that if (almost) every player is playing the same strategy across the entire domain, then it is a Nash equilibrium. However, we can find examples of non-consensus Nash equilibria. Consider the space $\mathbb{R}^n$ so that the upper half-space $\mathbb{R}^n_+$ plays strategy 1 and the lower half-space and its boundary play strategy 2. This is also a Nash equilibrium for any nonnegative, radially symmetric, measurable kernel. Consider any point in the upper half-space with coordinate $x=(\eta, x')$. Let $S=\{y=(y_1,y')\in \mathbb{R}^n_+; y_1\in (0,2\eta)\}$. Notice that, in the region above $S$, every player plays strategy 1, and in the region below $S$, every player plays strategy 2. Moreover, because $x$ is in the middle of $S$, we know that the mass of $K(x,\cdot)$ below $S$ is equal to the mass of $K(x,\cdot)$ above $S$. In other words, $\int_{\mathbb{R}^n_-}K(x,y)dy=\int_{\mathbb{R}^n_+\setminus S}K(x,y)dy$. This means that   
    \begin{equation*}
    \begin{split} 
    w(x,1|u)&=\int_{\mathbb{R}^n} K(x,y)\chi_1(y)dy\\
    &=\int_SK(x,y)dy+\int_{\mathbb{R}^n_+\setminus S}K(x,y)dy\\
    &=\int_SK(x,y)dy+\int_{\mathbb{R}^n_-}K(x,y)dy\\
    &=\int_SK(x,y)dy+\int_{\mathbb{R}^n_- }K(x,y)\chi_2(y)dy\\
    &\geq w(x,2|u).
    \end{split}
    \end{equation*}

    Thus, no player in the upper half-space could improve their payoff by switching strategies. The same can be said for the lower half-space symmetrically. Those players on the boundary have $w(x,1|u)=w(x,2|u)$ and therefore cannot improve their payoff by switching strategies, no matter which strategy they pick.  This means that no player can improve their payoff unilaterally, and thus we have found a non-consensus Nash equilibrium. 
    
    Although we can treat this game classically as above, to study this game dynamically we must also describe a strategy revision protocol. In this case, we use the standard Myopic Best Response (MBR). MBR is a time-stepping process where, for a strategy profile $u$, every player simultaneously takes on their best response to $u$, resulting in a new strategy profile, $u'$. Player $x$'s best response to $u$ is whichever strategy maximizes their payoff if every other player plays according to the strategy profile $u$. 
    \begin{equation}\label{eq:BRdef}
        BR(x|u)=\arg\max_{i\in C}w(x,i|u)
    \end{equation}

    As an example, consider the strategy profile in figure (Fig. \ref{fig:ex1a}). Computing the payoff of playing strategy $1$ is simply a convolution of the kernel, which in this case is a Gaussian kernel) and $\chi_1$ (Fig. \ref{fig:ex1b}). Once the payoff for each strategy is computed and best responses are determined, a new strategy profile can be drawn from the best responses to the original strategy profile  (Fig. \ref{fig:ex1c}). As players close to the strategic boundary change their strategies, the strategic boundary itself shifts (Fig. \ref{fig:ex1d}). 

    \begin{figure}
        \centering
        \includegraphics[width=0.8\linewidth]{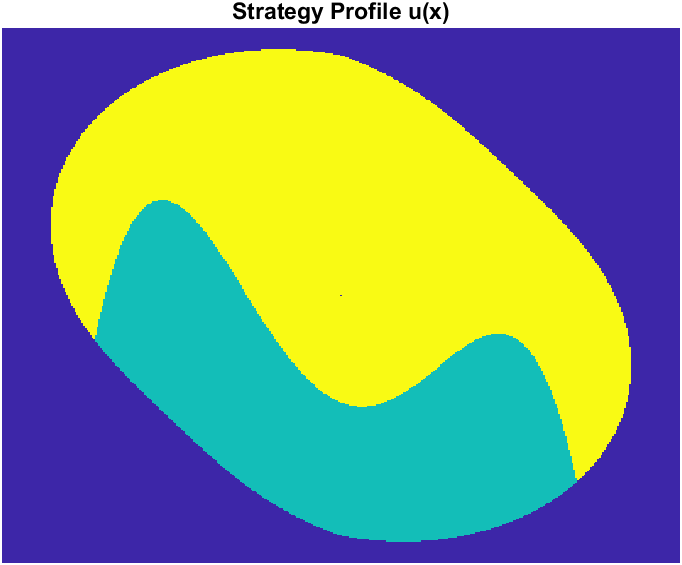}
        \caption{A strategy profile in a bounded domain in $\mathbb{R}^2$. Players colored cyan are playing strategy 1, and players colored yellow are playing strategy 2. Outside of the domain is colored dark blue. }
        \label{fig:ex1a}
    \end{figure}

    \begin{figure}
        \centering
        \includegraphics[width=0.49\linewidth]{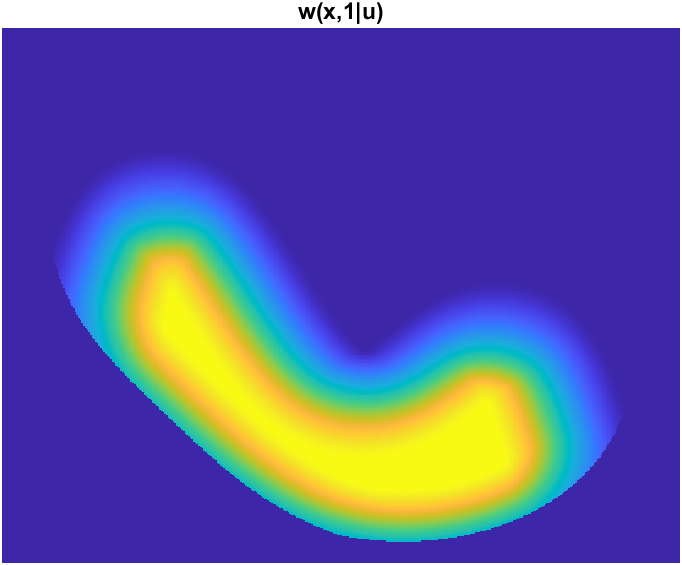}
        \includegraphics[width = 0.49\linewidth]{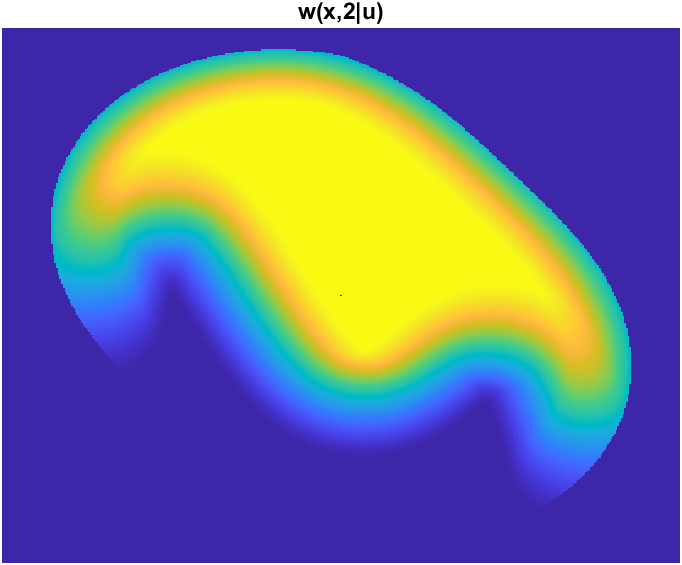}
        \caption{\textbf{left} The payoff of playing strategy 1 against the strategy profile $u$ and \textbf{ right} The payoff of playing strategy 2 against the strategy profile $u$. Brighter colors indicate a higher payoff.}
        \label{fig:ex1b}
    \end{figure}

    \begin{figure}
        \centering
        \includegraphics[width=0.7\linewidth]{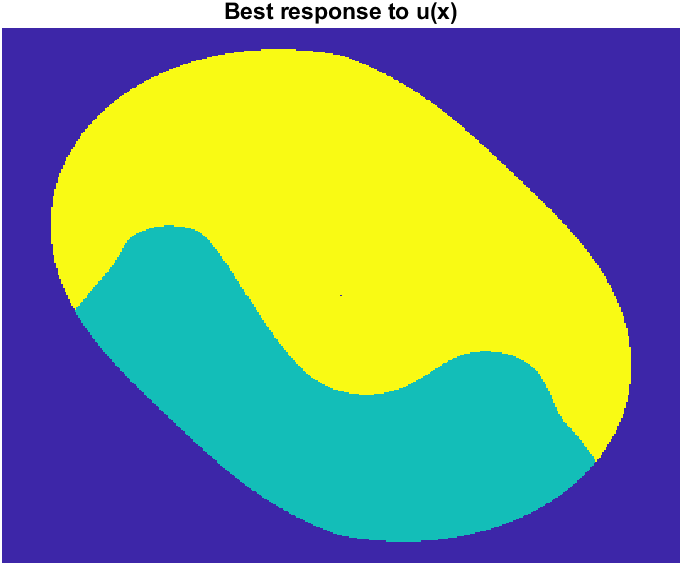}
        \caption{After a single iteration of Myopic best response, the strategy profile from \ref{fig:ex1a} changes. Away from the strategic boundary, most players keep the same strategy, but close to the strategic boundary some players change strategy. This gives the impression that the strategic boundary moves.}
        \label{fig:ex1c}
    \end{figure}

    \begin{figure}
        \centering
        \includegraphics[width=0.7\linewidth]{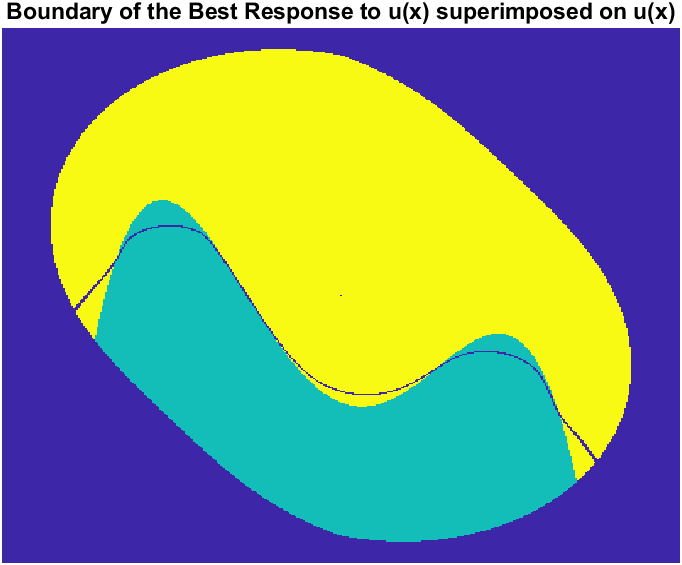}
        \caption{To illustrate the motion of the strategic boundary through myopic best response, we show the original strategy profile $u$ with the strategic boundary from its best response superimposed in black.}
        \label{fig:ex1d}
    \end{figure}
    
    If every player updates their strategy according to their best response simultaneously and repeatedly, this generates a sequence of strategy profiles. 
    \[\{u_t\}_{t=0}^\infty \text{ with }u_t(x)=BR(x|u_{t-1}) \text{ for $t>0$ }\]
    for some initial condition $u_0$. This is a best response sequence, and the boundary of the strategic communities evolves in time (Fig. \ref{fig:ex1e}). It is obvious that, if a best response sequence has a limit, then that limit is necessarily a Nash equilibrium.

    \begin{figure}
        \centering
        \includegraphics[width=0.7\linewidth]{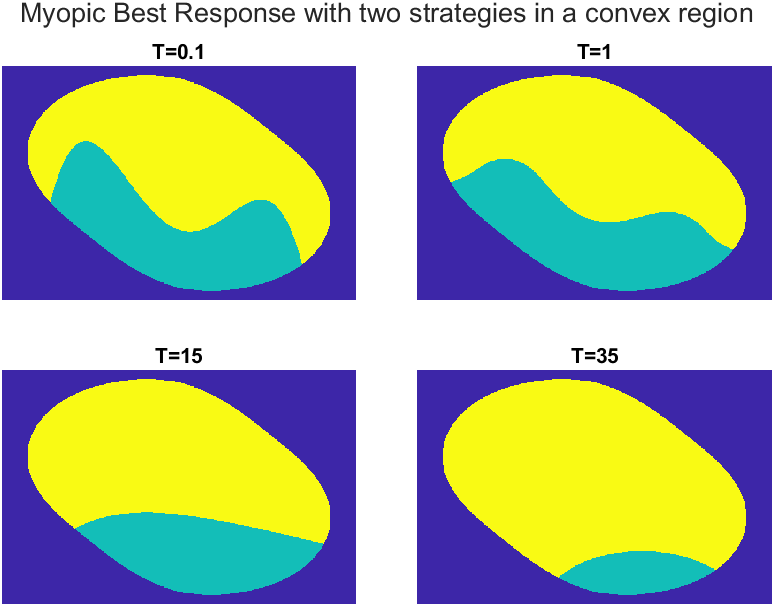}
        \caption{A sequence of strategy profiles generated by myopic best response with an explicit time step. At each time step, every player may change its strategy, and so the boundary between strategic communities shifts.}
        \label{fig:ex1e}
    \end{figure}

    Because the convolution makes the payoff functions uniformly continuous in $\Omega$, the intermediate value theorem ensures that there are players in $\Omega$ that have degenerate best responses, meaning that at least two different strategies maximize payoff. If the set of players with degenerate best responses has positive Lebesgue measure, then the dynamics may be dependent on the choice of tie-breaking rule. However, later in section \ref{sec:MainResult}, through our equivalence, we will be able to argue that the set of players with degenerate best responses will have empty interior and that any tie-breaking order will result in equivalent dynamics, so long as our initial strategy profile is sufficiently well-behaved. 

    It is important to mention that this game is well-posed for an extremely general class of kernels $K$. For the equivalence that we are interested in showing, we must put some additional assumptions on the kernel. These assumptions will be described later in more detail in subsection \ref{subsec:MCF}. However, there are two assumptions with game-theoretic meaning that we must describe now. The first is that the kernel is radially symmetric. This means that the likelihood of two players interacting is dependent only on the distance between these two players. For this reason, we can write $K(x,y)=K(x-y)$ and thus our fitness function is a proper convolution. The second is that it must have an appropriate scale parameter, $\epsilon$. The appropriate scaling that we will require is 
    \begin{equation}\label{eq:scale}K^\epsilon(x-y)=\frac{1}{\epsilon^n}K\left(\frac{x-y}{\epsilon}\right).\end{equation}
    This scaling parameter will be necessary when we embed our discrete sequence that results from MBR into $\Omega\times[0,T)$. The time step, $\Delta t$, will be required to be $\Delta t=\epsilon^2$, which supposes that as the time step decreases, the interaction kernel becomes concentrated close to $0$. This assumption is game-theoretically reasonable because it suggests that with less time to interact between changing strategies, each player will interact more with individuals close to them and less with individuals far away. As an example which we will come back to frequently, the kernel $K^{\sqrt\epsilon}(z)=\frac{1}{(4\pi \epsilon)^{\frac{n}{2}}}\exp\left|\frac{z}{2\sqrt{\epsilon}}\right|^2$ will satisfy all of the conditions for our equivalence and is the well known fundamental solution to the heat equation.  

    Notice that one player's absolute payoff does not depend on any other player's absolute payoff, so the game is not changed by modulating the payoff function by a multiple even if that multiple depends on $x$. This means that the dynamics of the game remain unchanged even if the kernel $K^\epsilon(x-y)$ is replaced with $\tilde K^\epsilon(x-y)=\frac{1}{\|K_\epsilon(x-\cdot)\|_{L^1(\Omega)}}K_\epsilon(x-y)$. In $\mathbb{R}^n$ this has no effect because $\|K^\epsilon(x-\cdot)\|_{L^1(\mathbb{R}^n)}$ is independent of $x$. In a bounded domain, this has the effect of ensuring that $\sum_iw(x,i|u)=1$ for all $x, i,$ and $u$.

    We have now described the game and the strategy revision protocol entirely. Because of the space-time scaling (which is the standard parabolic scaling), we will express a single \textit{time step} of myopic best response with the operator $\mathcal{H}(h)$, so a myopic best response sequence is written as $\{u_0,\mathcal{H}(h)u_0, \mathcal{H}(h)^2u_0,...\}$. We can explicitly express this operator by writing
    \begin{equation}\label{eq:mbr}
        \mathcal{H}(h)u_i = \arg\max_{i\in C}\int_\Omega K^{\sqrt{h}}(x-y)\chi_i(y)dy
    \end{equation} 
    where $\chi_i$ is the indicator function for $\{x\in \Omega;u(x)=i\}$.
    Notice that the kernels are scaled as $\sqrt{h}$ because of the parabolic space-time relationship assumed above. The sequence altogether embedded in space and time is written as \[\bigcup_{n=0}^\infty \{u(nh)\}\times\{nh\} \subset\Omega\times[0,\infty)\] where $u((n+1)h)=\mathcal{H}(h)u(nh)$ and $u(0)=u_0$. For clarity, when we discuss a strategic community $i$, we mean the set of players playing strategy $i$ at time $nh$ (i.e., $Q^i_{nh}=\{x\in \Omega, u(x)=i\}$). Because the boundaries of the strategic communities are crucial to understanding the equivalence we propose, we call the complex of strategic boundaries $\Gamma_{nh}=\bigcup_{i=1}^{m}\partial Q_{n\epsilon}^i\setminus \partial\Omega.$

\subsection{Mean Curvature Flow and its Approximation}\label{subsec:MCF}
Having described the MBR process, we will now give a brief background on the mean curvature flow. Readers interested in a more complete understanding of MCF from a differential geometry perspective should refer to \cite{AndrewsChowGuentherMat2020}. However, the background we present here will be sufficient to understand the equivalence we present and its implications. 

\subsubsection{Motion by mean curvature}

Suppose that we have a closed hypersurface in \((n+1)\)-dimensional space, \(M_{0} \subset \mathbb{R}^{n+1}\), and further suppose we have a variation \(M_{t}\) of \(M_{0}\), that is, a one-parameter family of hypersurfaces which coincides with \(M_{0}\) at \(t = 0\). Then the first-variation formula for the surface area implies 
\[
    \left. \frac{d}{dt} \right|_{t = 0} \text{Vol}(M_{t}) =  \int_{M_{0}}\left\langle H\vec{\nu},\left. \frac{\partial x}{\partial t}\right|_{t = 0}\right\rangle d\mathcal{H}^{n},
\]
where \(x\) is position, \(\mathcal{H}^{n}\) is the usual \(n\)-dimensional Hausdorff measure, \(\vec{\nu}\) is the \emph{outward} pointing normal, and \(H\) is the \emph{mean curvature} defined by \(\text{div}_{M_{0}}\vec{\nu}\). From the first variation of area formula, we see that the most efficient way to instantaneously reduce the volume of \(M_{0}\) is to perturb so that 
\begin{equation}\label{eq:MCF1}
    \displaystyle \left. \frac{\partial{x}}{\partial t}\right|_{t = 0} = -H\vec{\nu}
\end{equation}
on \(M_{0}\). If \eqref{eq:MCF1} is satisfied for each \(t\), then we call the family \(M_{t}\) a \emph{mean curvature flow}. More concretely, we say that a family of hypersurfaces, \(\{M_{t}\}_{t \in [0,T)}\), is a mean curvature flow if 
\begin{equation}\label{eq:MCF}
    \displaystyle\frac{\partial x}{\partial t} = - H\vec{\nu}
\end{equation}
on \(M_{t}\) for each \(t \in [0,T)\). We note that the right-hand side of \eqref{eq:MCF} is equal to \(\Delta_{M_{t}}x\), and so the mean curvature flow equation may be written \((\partial_{t} - \Delta_{M_{t}})x = 0\) and is sometimes called the \emph{geometric heat equation}. An important fact from the evolution equation \eqref{eq:MCF} is that the stationary solutions to the mean curvature flow are precisely those surfaces with mean curvature everywhere equal to \(0\), which are called \emph{minimal surfaces}. If \(n = 1\), we call the resulting flow the \emph{curve shortening flow}. Some examples of the mean curvature flow are:
\begin{itemize}
    \item The stationary plane. In this case \(M_{t} = P\) where \(P\) is a hyperplane in \(\mathbb{R}^{n+1}\), which trivially satisfies \eqref{eq:MCF} since in this case \(H = 0\).
    \item The shrinking sphere. In this case \(M_{t} = \left(\sqrt{r^{2} - 2nt}\right)S^{n}\), which describes a shrinking family of spheres.
    \item The grim reaper. In this case, \(n = 1\) and \(M_{t} = \{(x,y) \in \mathbb{R}^{2} : -\frac{\pi}{2} < x < \frac{\pi}{2}, y = t -\ln\cos x\}\), which describes a family of curves which translate vertically at unit speed.
\end{itemize}

The shrinking sphere demonstrates that singularities may form in finite time, and therefore, it is natural to ask whether or not a singularity may form before the flow disappears. In the case of \(n = 1\), the curve shortening flow (Fig. \ref{fig:csfimage}), the answer is negative in the case that the initial curve is closed and embedded, which is due to the combined work of Gage-Hamilton \cite{GageHamilton1986} and Grayson \cite{Grayson1989}. However, in higher dimensions, closed examples of singularity formation before the flow disappears do exist; see \cite[Remark 3.8]{Ecker2004} for example. In this case, one would like to develop a theory for evolving past singularities, and one such way is the so-called \emph{level-set flow}.

\begin{figure}
    \centering
    \includegraphics[width=0.6\linewidth]{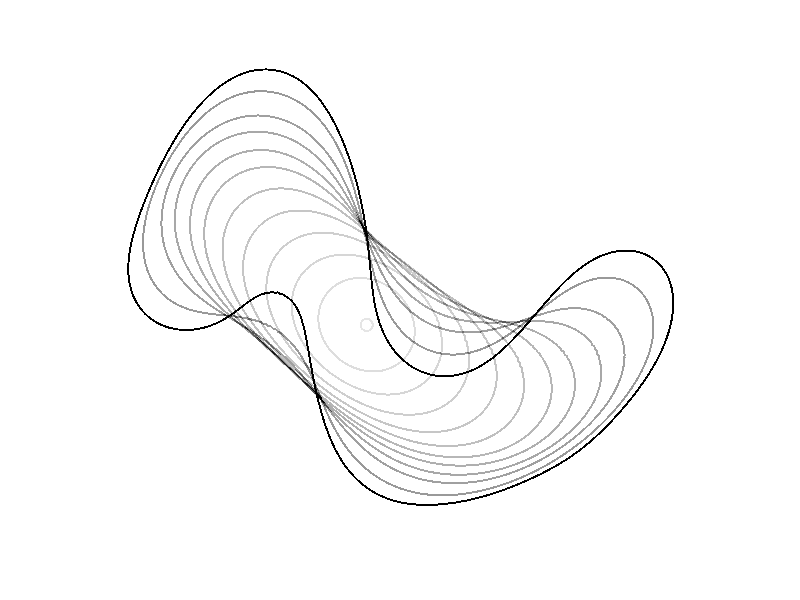}
    \caption{The curve shortening flow is a special case of the mean curvature flow with $n=1$. The initial curve (in black) is shown, with several successive time slices in grey. The curve evolves in time and eventually dies at a round point in finite time. This image was generated using a visualization tool made by Carapetis \cite{Acar}.}
    \label{fig:csfimage}
\end{figure}


\subsubsection{The level set approach}
A very important element to understanding the equivalence we propose is the so-called level set approach to geometric flows. For the mean curvature flow in particular, a solution to the level set equation 
\begin{equation}\label{eq:levelset}
    \begin{split} 
        u_t &= |\nabla u|\;\text{div}\left(\frac{\nabla u}{|\nabla u|}\right) \\
        \Gamma_{0} = &\{x \in \mathbb{R}^{n+1}; u(x,0) = 0\}
    \end{split}
\end{equation} 
will have level sets that satisfy the mean curvature flow in $\mathbb{R}^{n+1}$ with initial data \(\Gamma_{0}\). Moreover, if the domain is restricted to $\Omega$ with $C^2$ boundary, then solutions to equation \ref{eq:levelset} with Neumann boundary conditions will have level sets that satisfy the free boundary mean curvature flow, meaning the surface meets the domain boundary orthogonally for all positive time. Furthermore, the level set flow is known to flow through possible singularities which may form in the classical viewpoint; however, through a process called \emph{fattening} (which is referred to here as the non-empty interior difficulty), this may no longer be unique \cite[Example 7.3]{Ilmanen1992}. If we know some a priori structure on the initial level-set, this may be avoided (see \cite{HershkovitsWhite2020} for example). 

These remarkable results were first shown by Evans and Spruck through a series of papers \cite{Evans1991,Evans1992a,Evans1992b,Evans1995}. Among the important results are the existence and uniqueness of viscosity solutions to \eqref{eq:levelset} and the fact that the motion of the level sets is entirely geometric. Further results concerning the regularity of such solutions may be found in \cite{ColdingMinicozzi2016} and \cite{ColdingMinicozzi2018}. These results, which were initially shown in the Cauchy setting, were extended to work in a bounded domain with $C^2$ boundary and Neumann boundary conditions by \cite{Sato1994}. It is easy to see that with Neumann boundary conditions, a solution to \eqref{eq:levelset} will have level sets that meet the domain boundary orthogonally. This means that the MCF replicated by the level set approach with Neumann boundary conditions is the \textit{free boundary} mean curvature flow, where the evolving surface meets the domain boundary orthogonally. We introduce this approach to understanding mean curvature flow because it is crucial to connect the approximation methods to MCF with the flow itself. 

\subsubsection{Approximation through diffusion}
The most crucial element of the connection between MBR and MCF is a scheme originally used to approximate mean curvature flow developed by Merriman, Bence,  and Osher \cite{Bence1992}. This Scheme (called the MBO scheme) was developed in 1992, and its convergence was proven by Evans in 1993 \cite{Evans1993} and separately by Barles and Georgelin in 1995 \cite{Barles1995}. 

The algorithm they propose is described as follows. Consider a closed region $R_0\subset\mathbb{R}^n$ with a smooth boundary, which we call $\Gamma_0=\partial R_0$. Let $\chi_{R_0}$ be the characteristic function for $R_0$. Find the unique $u$ which solves $u_t=\Delta u\in \mathbb{R}^n\times(0,h)$ with $u(\cdot, 0)=\chi_{R_0}$. This is equivalent to saying $u(\cdot, h)=S(h)\chi_{R_0}$, where $S(h)$ is the semigroup operator for the heat equation.  Having diffused the characteristic function, we form a new region $R_h$ with properties $R_h=\{x\in \mathbb{R}^n;u(x,h)\geq\frac{1}{2}\}$. It was proven by Evans that if $R_0$ is closed with a smooth boundary, so too is $R_h$ \cite{Evans1993}. Repeating this process, we generate a sequence of sets described in the following way

\begin{equation}\label{eq:MBO}
    R_{(n+1)h}=\left\{x\in \mathbb{R}^n; S(h)\chi_{R_{nh}}(x)\geq \frac{1}{2}\right\}
\end{equation}

This process results in a sequence of characteristic functions $\chi_{R_{nh}}$ for the sets $R_{nh}$ which are themselves superlevel sets of the previous characteristic function after diffusing for time $h$. This was originally proposed as a numerical method for the fast computation of curvature flows, but it has been adapted and applied to many different kinds of threshold dynamics and geometric flows.  Convergence was first proved by Evans in 1993 using the theory of nonlinear semigroups to show that the motion generated by one step of the MBO process is within $o(t)$ of the motion of level sets of solutions to \eqref{eq:levelset} after time $t$, and so the limits coincide. At the same time, Barles and Georgelin showed that the limit of the sequence of characteristic functions existed and was both a viscosity sub- and super-solution to \eqref{eq:levelset}.

In either method, they had to contend with the possibility that the level set from the MBO scheme was not guaranteed to have an empty interior, in the same way that the MBR process does not guarantee that the set of players with degenerate best responses will have an empty interior. The conditions for when the non-empty interior difficulty is avoided were made possible by a result about the level set equation by \cite{Barles1993}. For this result, and others to follow, define $dd(x,\partial R)$ as the signed distance of a point $x$ from a surface $\partial R$ which is positive inside of $R$ and negative outside of $R$.
\begin{lemma}[Non-empty interior difficulty for the level set equation (from \cite{Barles1993}]\label{lem:empty interior}
    If $R\subset \mathbb{R}^n$, $\Gamma = \partial R$, and $u$ is the solution to \eqref{eq:levelset} with initial data $u(x,0)=dd(x,\Gamma)$, then $\Gamma_t:=\{u(x,t)=0\}$ has an empty interior if and only if there is a unique solution to \eqref{eq:levelset} with initial data $\chi_R-\chi_{R^c}.$ 
\end{lemma}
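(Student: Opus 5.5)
The plan is to run everything through the comparison principle and geometric invariance of viscosity solutions of \eqref{eq:levelset}, in the Evans--Spruck and Chen--Giga--Goto framework. The two facts I will use are these. First, if $\psi$ is nondecreasing and continuous and $u$ is a viscosity solution, then $\psi(u)$ is also a viscosity solution. Second, bounded uniformly continuous initial data give unique solutions, and the solution is order preserving. Write $v$ for the solution with initial data $dd(x,\Gamma)$. Because the initial data $\chi_R-\chi_{R^c}$ is discontinuous, uniqueness for it has to be read as follows: the minimal and maximal (discontinuous viscosity) solutions coincide. I will build these extremal solutions explicitly from $v$.

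\textbf{Step 1: extremal solutions.} Take nondecreasing continuous $\psi_k\colon\mathbb{R}\to[-1,1]$ with $\psi_k(s)=-1$ for $s\le 0$, $\psi_k(s)=1$ for $s\ge 1/k$, and $\psi_k$ increasing to $\chi_{(0,\infty)}-\chi_{(-\infty,0]}$. Then $\psi_k(v)$ solves \eqref{eq:levelset}, and $\psi_k(dd)\le \chi_R-\chi_{R^c}$ when $R$ is taken open (adjust the convention if $R$ is closed). The comparison principle for discontinuous data, in the Barles--Souganidis stability form, says every subsolution with data below $\chi_R-\chi_{R^c}$ stays below any supersolution with data above it. Taking the monotone limit, the minimal solution is
\[
\underline{w}=\mathbf{1}_{\{v>0\}}-\mathbf{1}_{\{v\le 0\}} .
\]
Approximating from above with $\phi_k(s)$ equal to $1$ for $s\ge 0$ and $-1$ for $s\le -1/k$ gives the maximal solution
\[
\overline{w}=\mathbf{1}_{\{v\ge 0\}}-\mathbf{1}_{\{v<0\}} .
\]
Stability of viscosity solutions under monotone limits (half-relaxed limits) shows both are solutions, taking semicontinuous envelopes. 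Every solution $w$ with the given data then satisfies $\underline{w}\le w\le\overline{w}$.

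\textbf{Step 2: the equivalence.} We have $\overline{w}-\underline{w}=2\,\mathbf{1}_{\{v=0\}}$. So uniqueness, meaning $\underline w=\overline w$ as viscosity solutions (equal up to semicontinuous envelopes), is equivalent to $\{v=0\}$ being negligible in the envelope sense. Concretely, the upper semicontinuous envelope of $\underline w$ must equal $\overline w$ and the lower envelope of $\overline w$ must equal $\underline w$. That holds exactly when every point of $\{v(\cdot,t)=0\}$ is a limit of points of $\{v>0\}$ and of $\{v<0\}$, which says $\Gamma_t=\{v(\cdot,t)=0\}$ has empty interior.

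For the "only if" direction: if $\Gamma_t$ contains a ball, then $\underline w=-1$ and $\overline w=1$ there, and these values are stable under the envelopes. This gives two genuinely different solutions, contradicting uniqueness.

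For the "if" direction: if $\Gamma_t$ has empty interior for each $t$, then $\{v>0\}\cup\{v<0\}$ is dense. Hence $\underline w^{*}=\overline w$ and $\overline w_{*}=\underline w$, and any solution squeezed between them has the same envelopes. It is therefore unique.

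\textbf{Main obstacle.} The delicate step is Step 1. I need a comparison principle strong enough for discontinuous initial data, and I need to show the monotone limits of $\psi_k(v)$ are exactly the extremal solutions. I would try Barles--Souganidis half-relaxed limits combined with the invariance $u\mapsto\psi(u)$ first. A further care point is the time-slice subtlety: empty interior of $\Gamma_t$ for each $t$ versus in space-time. I expect this is handled by the same density argument applied in $\mathbb{R}^n\times(0,T)$.
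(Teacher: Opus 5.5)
The paper gives no proof of this lemma; it quotes it from the cited Barles reference. So your argument has to stand on its own. Most of it does, with two small repairs.

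\textbf{Step 1 (minor fix).} Apply comparison only where it is actually available. With $g=\chi_R-\chi_{R^c}$, compare the continuous solutions $\psi_k(v)$, $\phi_k(v)$ with the envelopes $w_*$, $w^*$ of an arbitrary solution $w$. Their data $\psi_k(dd)\le g_*$ and $\phi_k(dd)\ge g^*$ are uniformly continuous, so this comparison is standard. There is no comparison between general sub- and supersolutions that take discontinuous data in the relaxed sense; fattening is precisely the failure of such a comparison. With this repair you get $\underline w\le w_*\le w^*\le\overline w$, and half-relaxed limits show that $\underline w$ and $\overline w$ are solutions.

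\textbf{Uniqueness $\Rightarrow$ empty interior (minor fix).} This direction then holds, but a ball in a single slice $\Gamma_{t_0}$ must first be upgraded to a space-time open subset of $\{v=0\}$. Density does not do this. Comparison with shrinking spheres, applied to $\{v\ge 0\}$ and $\{v\le 0\}$, does, using that $v(\cdot,t)$ is $1$-Lipschitz.

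\textbf{Empty interior $\Rightarrow$ uniqueness (genuine gap).} You correctly reduce uniqueness to $\overline{\{v>0\}}=\{v\ge 0\}$ and $\overline{\{v<0\}}=\{v\le 0\}$, i.e.\ every zero of $v$ is approached by points of \emph{each} sign. You then identify this with $\{v=0\}$ having empty interior, and that identification is false. Empty interior only makes $\{v\neq 0\}$ dense, so the ``hence'' in your last step does not follow.

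In fact the statement fails without a further hypothesis. Take $R=\mathbb{R}^n\setminus\{x_1=0\}$. Then $dd=|x_1|$ and $v(x,t)=|x_1|$ for all $t$, so the zero set is a hyperplane, which has empty interior. Yet your own extremal solutions $\overline w\equiv 1$ and $\underline w=1-2\cdot\mathbf{1}_{\{x_1=0\}}$ both take the data $\chi_R-\chi_{R^c}$ and have different lower envelopes.

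As I recall the Barles--Soner--Souganidis version, it carries exactly the hypothesis that the initial front is reached from both sides: $\Gamma=\partial R=\partial(\mathbb{R}^n\setminus\overline R)$. Its real content is a PDE argument showing that, under the no-interior condition, this two-sided property propagates from $t=0$ to all $t>0$. Topology alone cannot give this. A continuous function can have a two-sided zero set at $t=0$ and a zero set with empty interior throughout, and still develop one-sided zeros later. So you need to add a hypothesis of this kind on $R$ and prove the propagation using the equation itself. That is the missing idea.
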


Later in their proof of convergence, \cite{Barles1995} showed that these are the exact conditions under which the MBO process will avoid the empty interior difficulty. That is to say, whenever $R$ is such that the solution to \eqref{eq:levelset} with initial data $u(x,0)=dd(x,\partial R)$ in  $\mathbb{R}^n$ has an empty interior, then the MBO process will avoid the non-empty interior difficulty. Although the exact conditions under which this occurs are not perfectly described, \cite{Barles1993} gives a sufficient condition for when the conditions of Lemma \ref{lem:empty interior} are satisfied. For our discussion, we can say that if $\partial R$ is $C^2$ and ``sufficiently well behaved," then it satisfies the condition of Lemma \ref{lem:empty interior} and thus MBO does not encounter the non-empty interior difficulty. For examples of regions which are not ``sufficiently well behaved," see \cite{Evans1991,Evans1992a,Barles1993}.

When the original region is sufficiently well behaved, not only will MBO avoid the non-empty interior difficulty, but the process will also replicate the mean curvature flow as $h\to 0$.  

\begin{lemma}[Convergence of MBO from \cite{Barles1995} (equivalent to \cite{Evans1993})]\label{lem:convergence1}
    Let $R_0\subset \mathbb{R}^n$, and let $u$ be the solution to \eqref{eq:levelset} in $\Omega\times [0,T]$ with initial data $dd(x,\partial R_0)$, If $\Gamma_t=\{u(x,t)=0\}$ has an empty interior (i.e., $R$ is sufficiently well behaved), then 
    \[\lim_{h\to 0}\bigcup_{n=0}^{T/h}(\partial R_{nh}\times \{nh\})=\bigcup_{t\geq 0}(\Gamma_t\times \{t\})\] in the Hausdorff sense. 
\end{lemma}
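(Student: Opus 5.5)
This lemma is cited from \cite{Barles1995}, and its proof follows the Barles--Souganidis framework. The plan is to work with the scheme as an operator on functions and pass to the limit through viscosity solutions.

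First, encode the set evolution as a monotone scheme acting on functions. For a bounded uniformly continuous $v$, define $\mathcal{T}(h)v(x)=\sup\{\lambda;\ S(h)\chi_{\{v\geq\lambda\}}(x)\geq \tfrac12\}$, so that superlevel sets of $\mathcal{T}(h)v$ are exactly the MBO updates of superlevel sets of $v$. This operator has three basic properties:
\begin{itemize}
\item it is monotone, since $v\le w$ implies $\mathcal{T}(h)v\le \mathcal{T}(h)w$, because $S(h)$ is positivity preserving;
\item it commutes with constants, since $\mathcal{T}(h)(v+c)=\mathcal{T}(h)v+c$;
\item it commutes with nondecreasing relabelings $\phi$, in the sense $\mathcal{T}(h)(\phi\circ v)=\phi\circ\mathcal{T}(h)v$, which reflects the geometric nature of \eqref{eq:levelset}.
\end{itemize}
Set $u^h(x,t)=\mathcal{T}(h)^{\lfloor t/h\rfloor}dd(\cdot,\partial R_0)$, and define the half-relaxed limits $\overline u=\limsup^* u^h$ and $\underline u=\liminf_* u^h$.

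The key step is consistency. For a smooth test function $\varphi$ with $\nabla\varphi(x_0)\neq0$, I would expand $S(h)\chi_{\{\varphi\ge \varphi(x_0)+c\}}(x_0)$ using the Gaussian kernel. Locally the level set is a graph, and a Taylor expansion to second order shows that the heat mass on the superlevel side equals $\tfrac12+\frac{c'}{\sqrt h}\cdot(\text{const})+\sqrt h\,(\text{const})\,\mathrm{div}(\nabla\varphi/|\nabla\varphi|)+o(\sqrt h)$. Solving for the threshold then gives
\[\mathcal{T}(h)\varphi(x_0)=\varphi(x_0)+h\,|\nabla\varphi|\,\mathrm{div}\!\left(\tfrac{\nabla\varphi}{|\nabla\varphi|}\right)(x_0)+o(h).\]
At points where $\nabla\varphi=0$, I would use the Barles--Georgelin test functions $\varphi=f(|x-x_0|)$ with $f^{(4)}$-type degeneracy to get the correct relaxed inequalities. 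I expect this degenerate-gradient consistency to be the main obstacle, because the level set equation \eqref{eq:levelset} is singular there. Within the consistency step, the other delicate point is making the Taylor remainders uniform.

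With monotonicity, stability and consistency in hand, $\overline u$ is a viscosity subsolution and $\underline u$ a supersolution of \eqref{eq:levelset}. At $t=0$ both agree with $dd(\cdot,\partial R_0)$, which follows from barriers given by shrinking spheres, for which the MBO step is explicitly controlled. The comparison principle of Evans--Spruck and Chen--Giga--Goto then gives $\overline u\le\underline u$, so $u^h\to u$ locally uniformly.

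Finally, I would convert function convergence to Hausdorff convergence of the fronts. Locally uniform convergence immediately gives that limit points of $\partial R_{nh}\times\{nh\}$ lie in $\{u=0\}$. For the converse, I would use the empty-interior hypothesis, which by Lemma \ref{lem:empty interior} is equivalent to uniqueness for the data $\chi_R-\chi_{R^c}$. Under this hypothesis every point of $\Gamma_t$ is a limit of points where $u>0$ and of points where $u<0$. Uniform convergence then forces $\partial R_{nh}$ to pass nearby, because $u^h$ changes sign there, and this yields the two-sided Hausdorff convergence.
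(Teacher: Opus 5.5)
The paper does not prove this lemma; it cites Barles--Georgelin. Their argument, as the paper summarizes it and later invokes in the proof of Theorem~\ref{thm:Cauchy}, works directly with the discontinuous functions $\chi_{R_{nh}}-\chi_{R_{nh}^c}$. The half-relaxed limits of these are discontinuous sub- and supersolutions of \eqref{eq:levelset} with data $\chi_{R_0}-\chi_{R_0^c}$. Non-fattening then enters through the uniqueness in Lemma~\ref{lem:empty interior}, which pins the limits to $\chi_{\{u\ge0\}}-\chi_{\{u<0\}}$ and $\chi_{\{u>0\}}-\chi_{\{u\le0\}}$ and gives both Hausdorff inclusions at once.

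You instead lift the dynamics to a function-valued scheme acting on the Lipschitz datum $dd(\cdot,\partial R_0)$. This is essentially Evans's formulation, and it is exactly the whole-space Gaussian case of the operator $G_h$ in \eqref{eq:generalMBO} that the paper uses for Theorem~\ref{thm:FreeBoundary}. What it buys you:
\begin{itemize}
\item Convergence $u^h\to u$ needs no non-fattening, only the comparison principle for continuous solutions.
\item Since $\lambda\mapsto S(h)\chi_{\{v\ge\lambda\}}(x)$ is left-continuous and $\mathcal T(h)$ preserves Lipschitz constants, you get $R_{nh}=\{u^h(\cdot,nh)\ge0\}$ and $\partial R_{nh}\subseteq\{u^h(\cdot,nh)=0\}$.
\item Hence the discrete fronts can only accumulate on $\{u=0\}$ regardless of fattening, and the hypothesis is isolated in the reverse inclusion.
\end{itemize}
The cited route avoids the auxiliary level-set function and works natively with the set-valued dynamics that MBR actually produces. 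Its price is reliance on the Barles--Soner--Souganidis theory of discontinuous solutions.

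One step of yours needs tightening. Two-sidedness of $\{u=0\}$ holds in space--time, not at fixed $t$: at the extinction time of a shrinking sphere, the last point has only $u<0$ around it at that instant. To locate a point of $\partial R_{nh}$, however, you need both signs of $u^h(\cdot,nh)$ on a small ball $B$ at a single time level. Non-fattening supplies this. Suppose $u(y,s)>0>u(y',s')$ with $y,y'\in B$, and no time between $s$ and $s'$ sees both signs on $B$. Then the closed sets $\{\tau:u(\cdot,\tau)\ge0\text{ on }B\}$ and $\{\tau:u(\cdot,\tau)\le0\text{ on }B\}$ cover that interval, so they intersect, forcing $u(\cdot,\tau)\equiv0$ on $B$, a contradiction. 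Once you have such a time $\tau$ with $u(z,\tau)>0>u(z',\tau)$ for some $z,z'\in B$, uniform convergence gives the same signs for $u^h(\cdot,\tau)$, and the segment $[z,z']$ meets $\partial R_{\lfloor\tau/h\rfloor h}$.

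Also, two-sidedness is really the content of the uniqueness statement, namely $\overline{\{u>0\}}=\{u\ge0\}$ and $\overline{\{u<0\}}=\{u\le0\}$. It does not follow from empty interior alone; it needs regularity such as $\partial R_0=\partial(\mathrm{int}\,R_0)$. For example, if $R_0$ is a circle in $\mathbb{R}^2$, then $\Gamma_t$ is a shrinking circle with empty interior, yet $u\le0$ everywhere and one MBO step gives $R_h=\emptyset$. You should say explicitly that this is where ``sufficiently well behaved'' is used.
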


In the original paper, Bence, Merriman and Osher described how this method could be used even if the original boundary was not simple or if there was a complex of boundaries separating the space into more than two regions. The process for doing this involves simple pairwise comparisons of every diffused region. If a boundary complex separates the space into $N$ regions, each region $R_0^n$ is diffused in the same way so that $u^n(\cdot,h)=S(h)\chi_{R^n_0}$. Having diffused all of the regions, if $u^i(x, h)>u^j(x,h)$ for all $j$, then $x$ is in region $R^i_h$. The new boundary complex will form whenever $u^i(x,h)=u^j(x,h)>u^k(x,h)$  for all $k\neq i, j$, and singular points in the boundary complex appear when there are at least three regions which satisfy 
$u^i(x,h)=u^j(x,h)=u^k(x,h)>u^l(x,h)$ for all $l\neq i,j,k$. Although the standard MCF does not account for such boundary complexes, relaxed versions of the MCF, like the Network flow \cite{Imanen2019}, do.

All of this theory was developed in the Cauchy setting with a Gaussian kernel. Two important generalizations were made. The first came from \cite{Ishii1994}, who found that the kernel need not be Gaussian. The author showed that the same convergence could be observed as long as $K$ satisfies the following conditions 
\begin{enumerate}
    \item [H1] $K$ is non-negative, measurable, and radially symmetric
    \item [H2] $\int_{\mathbb{R}^n}K(z)(1-|z|^2)dz<\infty$
    \item [H3] $\int_{\mathbb{R}^{n-1}}K((w,0))(1-|w|^2)dw< \infty$
    \item [H4] If $R:=(0,1)\to \mathbb{R}$ satisfies $\lim_{\rho \to 0^+}R(\rho)=+\infty$ and $\lim_{\rho \to 0^+}\rho R(\rho)^2=0$, then for any $g(z)=z^\intercal Az+a$ for $A\in \mathcal{S}^{n-1}$ and $a\in \mathbb{R}$, we have
    \[\lim_{\rho \to 0}\sup_{r\in(0,\rho)}\bigg|\int_{B_{n-1}(0,R(\rho))}K((z,rg(z)))g(z)dz-\int_{\mathbb{R}^n}K((z,0))g(z)dz\bigg|=0\]
\end{enumerate}
H1 is meaningful in the game-theoretic context, and its meaning was discussed in Subsection \ref{subsec:coordination}. H2 and H3 are growth conditions on $K(z)$ and a restriction of $K(z)$ to $n-1$ dimensions. The meaning of H4 has no game-theoretic interpretation, and indeed any reasonable kernel used in a game-theoretic context will satisfy H4. One can verify that the standard kernels like the Gaussian, cone, or radial cutoff function satisfy all of these conditions. More information about this assumption can be found in \cite{Ishii1994} and in \cite{Ishii1999}. 

The second important generalization was due to \cite{Ishii2001}, and it showed that this scheme can be adapted to a bounded domain with Neumann boundary data. The result relies on the work of \cite{Sato1994}, which described the level set approach with Neumann boundary conditions. In a bounded domain, the MBO process can be generalized with any kernel that satisfies H1-4, and the domain boundaries found by MBO will still converge to mean curvature flow with a right-angle condition. We now call this the free boundary mean curvature flow. The result is written here, and readers interested in the proof should refer to \cite{Ishii2001}.

\begin{lemma}[Convergence of MBO from \cite{Ishii2001}]\label{lem:convergence2} Let $\Omega$ be a bounded domain with $C^2$ boundary, let the generalize MBO operator in $\Omega$ with parameter $h$ be called 
\begin{equation}\label{eq:generalMBO}
    G_hg(x)=\sup\left\{\lambda \in\mathbb{R};\int_\Omega K^{\sqrt{h}}(x-y)\chi_{g\geq \lambda}(y)dy\geq\frac{1}{2}\int_\Omega K^{\sqrt{h}}(x-y)dy\right\}
\end{equation}
where $K^h(z) = \frac{1}{h^n}K(\frac{z}{h})$ and let $u^h_{nh}(x)$ be the sequence of functions generated by $u_{(n+1)h}^h=G_hu_{nh}^h$. Then for $R_0\subset \Omega$ as in Lemma \ref{lem:empty interior}, 
\[\lim_{h\to 0}\bigcup_{n=0}^{T/h}(u_{nh}^h\times[nh,(n+1)h))\]
is a viscosity solution to \eqref{eq:levelset} in $\Omega\times [0,T]$ with continuous initial data satisfying $\{u(x,0)=0\}=\partial R_0$ and with Neumann Boundary conditions.
\end{lemma}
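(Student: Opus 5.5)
This lemma is quoted from \cite{Ishii2001}, and the paper refers readers there for the proof. I would reproduce the Barles--Souganidis strategy that underlies Lemma \ref{lem:convergence1}, adapted to the Neumann problem of \cite{Sato1994}. The plan is to show that $G_h$ is a monotone, translation-invariant (commutes with adding constants) approximation scheme that is consistent with \eqref{eq:levelset} in the interior and with the Neumann condition at $\partial\Omega$. Once this is done, the half-relaxed limits of $u^h$ are a viscosity sub- and supersolution. The comparison principle for the Neumann level-set problem in $C^2$ domains then identifies the two limits, which gives local uniform convergence to the unique viscosity solution.

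\textbf{Step 1: structural properties.} If $g_1\le g_2$, then $\{g_1\ge\lambda\}\subset\{g_2\ge\lambda\}$, and H1 (nonnegativity of $K$) gives $G_hg_1\le G_hg_2$. Directly from \eqref{eq:generalMBO}, $G_h(g+c)=G_hg+c$. The operator is also invariant under monotone relabelings $\phi\circ g$, which is what makes the limit geometric. Together these give the uniform bound $\|u^h_{nh}\|_\infty\le\|u_0\|_\infty$. They also give a uniform modulus of continuity in space and time, obtained by comparing with barrier sets such as balls and the MBO evolution of shrinking spheres. With these bounds the upper and lower half-relaxed limits $\bar u,\underline u$ are well defined and finite.

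\textbf{Step 2: interior consistency.} Take a smooth test function $\varphi$ with $\nabla\varphi(x_0)\ne0$ and $x_0\in\Omega$. After rotating and rescaling, the superlevel set $\{\varphi\ge\varphi(x_0)-\lambda\}$ near $x_0$ is the region above a graph $x_n=rg(z')$ with $g$ quadratic. Expanding the kernel mass in the stretched variable $z/\sqrt h$ and using H2--H4 shows that $G_h\varphi(x_0)-\varphi(x_0)=h\,c_K|\nabla\varphi|\,\mathrm{div}(\nabla\varphi/|\nabla\varphi|)+o(h)$. Hypothesis H4 is exactly what controls the curved graph integral uniformly, and the constant $c_K$ is absorbed by a time rescaling. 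At critical points $\nabla\varphi=0$, I would use the standard Barles--Georgelin device: test functions of the form $f(|x-x_0|)$ with $f^{(4)}$ flat. \textbf{Step 3: boundary consistency.} Near $\partial\Omega$ the normalization $\frac12\int_\Omega K^{\sqrt h}$ reweights the threshold. I would flatten $\partial\Omega$ by a $C^2$ chart. For test functions with $\partial_\nu\varphi(x_0)>0$ at a boundary point, the missing kernel mass outside $\Omega$ is of order one on the scale $\sqrt h$. This produces a drift of size $\sqrt h\gg h$ that dominates in the correct direction, which gives the relaxed Neumann inequality $\min\{\varphi_t-F,\ \partial_\nu\varphi\}\le0$ for the subsolution, and symmetrically for the supersolution.

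\textbf{Step 4: conclusion.} The Barles--Souganidis argument gives that $\bar u$ is a subsolution and $\underline u$ a supersolution, with $\bar u(\cdot,0)\le u_0\le\underline u(\cdot,0)$ obtained from initial barriers. Sato's comparison principle \cite{Sato1994} for $C^2$ domains then gives $\bar u\le\underline u$, so the two coincide with the viscosity solution. Geometric invariance makes the zero level set depend only on $\partial R_0$. The hypothesis of Lemma \ref{lem:empty interior} is needed only to pass from the function to the sets. I expect the main obstacle to be Step 3: a quantitative expansion of the kernel mass near a curved boundary in the regime $|x-\partial\Omega|\sim\sqrt h$, uniform in both the distance to the boundary and the curvature of the level set. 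My first attempt would be to compare with the half-space case, which is computable explicitly, and bound the curvature error by $O(h)$ using the tail bound H2.
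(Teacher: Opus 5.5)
Your outline is correct and is essentially the argument the paper relies on: the paper gives no proof of its own and defers to \cite{Ishii2001}. That result rests on this Barles--Souganidis route, namely monotonicity and commutation with constants, interior consistency from H1--H4 with $c_K$ absorbed into time, relaxed Neumann consistency from the one-signed effect of the kernel mass missing outside $\Omega$, and the comparison principle of \cite{Sato1994}. Two minor points: the spatial modulus of continuity in Step 1 is not needed (half-relaxed limits use only the $L^\infty$ bound) and does not follow on a bounded domain, where spatial translation invariance fails; and in Step 3 it is the sign of the boundary term, not its $\sqrt{h}$ size, that handles approximate maximizers at distance $\gg\sqrt{h}$ from $\partial\Omega$.
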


This is the final result we need to show our main result.  

\section{Main Result}\label{sec:MainResult}
The main result is to show that the MBR sequence converges to free boundary MCF. Our main ingredient will be the equivalence between MBR and generalized MBO. Once we show this, we can simply apply lemmas \ref{lem:convergence1} and \ref{lem:convergence2} to show that MBR is equivalent to MCF. Because the equivalence we show here is not terribly complicated, we will first prove it easily in the Cauchy setting with the Gaussian Kernel for intuition's sake. Then, we will prove it again in the Neumann setting with a general Kernel. 

\begin{theorem}[MBR replicates MCF in $\mathbb{R}^n$]\label{thm:Cauchy}
    Consider a strategy profile $u:\mathbb{R}^n\to \{0,1\}$ so that $R_0=\{x\in \mathbb{R}^n;u(x)=1\}$ is sufficiently well behaved (as in Lemma \ref{lem:empty interior})and let $\Gamma_0=\partial R_0$ be its boundary. Further, let $\cup_{t\geq 0}(\Gamma_t\times\{t\})$ be the \textit{unique} solution to the mean curvature flow from $\Gamma_0$ which exists until some time $T$ which may be infinite. If $\{u(nh)\}_{n=0}^{T/h}$ is the MBR sequence of strategy profiles generated by $\mathcal{H}(h)$ with a Gaussian Kernel $J^{\sqrt{h}}(x-y)=\frac{1}{\sqrt{4\pi h}^n}\exp\left(\left((\frac{|x-y|}{\sqrt{h}}\right)^2\right)$ starting from $\chi_{R_0}$ then 
    \[\lim_{h\to 0}\bigcup_{n=0}^{T/h}(\Gamma_{nh}^h\times \{nh\})=\bigcup_{t\geq 0}(\Gamma_{t}\times \{t\})\]
    In the Hausdorff sense, where $\Gamma_{nh}^h=\partial R_{nh}^h=\partial\{x\in \mathbb{R}^n;u(nh)(x)=1\}$.
\end{theorem}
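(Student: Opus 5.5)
The plan is to show that one step of MBR with the Gaussian kernel, $\mathcal{H}(h)$, coincides with one step of the MBO scheme \eqref{eq:MBO}, and then invoke Lemma \ref{lem:convergence1}. With two strategies $C=\{0,1\}$, write $\chi_1=\chi_{R_{nh}}$ and $\chi_0=1-\chi_{R_{nh}}$ (up to the measure-zero boundary, which does not affect integrals). The Gaussian $J^{\sqrt h}$ (read with the intended negative exponent and the heat-kernel normalization) is exactly the heat semigroup kernel. Hence
\[
w(x,1|u(nh))=\int_{\mathbb{R}^n}J^{\sqrt h}(x-y)\chi_{R_{nh}}(y)\,dy=S(h)\chi_{R_{nh}}(x).
\]
Since $\int J^{\sqrt h}=1$, we also get $w(x,0|u(nh))=1-S(h)\chi_{R_{nh}}(x)$.

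It follows that $x$ best-responds with strategy $1$ iff $S(h)\chi_{R_{nh}}(x)\ge 1-S(h)\chi_{R_{nh}}(x)$, i.e.\ iff $S(h)\chi_{R_{nh}}(x)\ge \tfrac12$. This holds with strict preference except on the tie set $\{S(h)\chi_{R_{nh}}=\tfrac12\}$. If we adopt the tie-breaking rule ``ties go to strategy $1$'', then $R^h_{(n+1)h}=\{S(h)\chi_{R^h_{nh}}\ge\tfrac12\}$, which is precisely \eqref{eq:MBO}. By induction from $R^h_0=R_0$, the MBR sets $R^h_{nh}$ equal the MBO sets $R_{nh}$ for every $n$, so $\Gamma^h_{nh}=\partial R_{nh}$. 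Lemma \ref{lem:convergence1}, applied with the hypothesis that $R_0$ is sufficiently well behaved (so that by Lemma \ref{lem:empty interior} the level set flow has empty interior and coincides with the unique MCF $\Gamma_t$ up to time $T$), then gives the stated Hausdorff convergence.

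The main obstacle is the tie-breaking rule, i.e.\ the set of players with degenerate best responses. Any other tie rule gives a set sandwiched between $\{S(h)\chi>\tfrac12\}$ and $\{S(h)\chi\ge\tfrac12\}$. To handle this, I would first note that $S(h)\chi_R$ is real-analytic and nonconstant whenever $R$ is neither null nor conull, so its $\tfrac12$-level set has measure zero. Consequently, different tie rules produce sets that agree almost everywhere. The characteristic functions, and hence all subsequent convolutions, are then unchanged, and the iterates differ only on a level set with empty interior.

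Finally, I would compare boundaries. The closures and boundaries of the sets from the strict and non-strict rules agree except possibly at degenerate critical points of $S(h)\chi$ on the level set. Alternatively, I would appeal to the Barles–Georgelin argument behind Lemma \ref{lem:convergence1}, which treats upper and lower semicontinuous envelopes of the scheme and shows both converge to the same front exactly under the empty-interior condition. This makes the limit independent of the tie rule, and the statement follows.
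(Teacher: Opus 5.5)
Your proposal is correct and follows the paper's proof. You identify one step of $\mathcal{H}(h)$ with one MBO step \eqref{eq:MBO} using $\int J^{\sqrt h}=1$ with ties going to strategy $1$, induct on $n$, and then invoke Lemma~\ref{lem:empty interior} and Lemma~\ref{lem:convergence1} under the well-behavedness hypothesis; your additional analyticity argument that the tie set $\{S(h)\chi_{R_{nh}}=\tfrac12\}$ is Lebesgue-null (so every tie rule gives a.e.-equal iterates and identical convolutions) is sound, and is sharper than the paper's later $u\leftrightarrow 1-u$ symmetry remark on tie-breaking.
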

\begin{proof}
    The first step is to show that a single step of MBR with kernel $J^{\sqrt{h}}$ is equivalent to the MBO process in $\mathbb{R}^n$. This can be done through a simple computation once we observe that $J^{\sqrt{h}}$ is the fundamental solution to the heat equation after time $t=h$. 

    Let $u_0$ be a strategy profile and let $R_0=\{x\in \mathbb{R}^n; u_0(x)=1\}$. Now we compute $\mathcal{H}(h)u_0$ and see that
    \begin{align*}
        \mathcal{H}(h)u_0(x)=1\iff \int_{\mathbb{R}^n}J^{\sqrt{h}}(x-y)\chi_{R_0}(y)dy\geq \int_{\mathbb{R}^n}J^{\sqrt{h}}(x-y)\chi_{R_0^c}(y)dy
    \end{align*}
    Recall that, because $J$ has mass normalized to 1 and that $R_0\cup R^c_0=\mathbb{R}^n$ this is equivalent to writing
    \begin{align*}
        \mathcal{H}(h)u_0(x)=1&\iff \int_{\mathbb{R}^n}J^{\sqrt{h}}(x-y)\chi_{R_0}(y)dy\geq \frac{1}{2}\\
        &\iff (S(h)\chi_{R_0})(x)\geq \frac{1}{2}
    \end{align*}
    This is exactly equivalent to the update rule in the MBO scheme as in \eqref{eq:MBO}, and so we can say that player $x$ plays strategy $1$ if and only if $x\in R_1$, where $R_1$ is the region resulting from one step of MBO starting from $R_0$. This means that the first step is complete and MBR is equivalent to MBO in the Cauchy setting. 

    We have assumed that the initial domain is sufficiently well-behaved that \eqref{eq:levelset} has a solution $v$ with initial data $v(x,0)=dd(x,\partial R_0)$ which has a level set $\{x\in \mathbb{R};v(x)=0\}$ without an interior. By Lemma \ref{lem:empty interior}, there is a unique viscosity solution to \eqref{eq:levelset} with initial data $\chi_{R_0}-\chi_{R_0^c}$, and so the MBO scheme will not develop an interior. 

    Now let $\{R^h_{nh}\}_{n=0}^{T/h}$ be the sequence of regions generated by the MBO scheme with time step $h$. According to \cite{Barles1995} and \cite{Evans1993}, $\lim_{h\to 0}\cup_{n=0}^{T/h} ((\chi_{R_{nh}^h}(x)-\chi_{(R_{nh}^h)^c})\times\{nh\})$ is a viscosity solution for \eqref{eq:levelset} with initial data $\chi_{R_0}-\chi_{R_0^c}$. And so, as in Lemma \ref{lem:convergence1}, 
    \[ \lim_{h\to 0}\bigcup_{n=0}^{T/h} (\partial R_{nh}^h\times\{nh\})=\bigcup_{t\geq 0}\Gamma_t\times\{t\}\]
    in the Hausdorff sense. 

    With both of these points, it is easy to conclude that if $Q_{nh}^h=\{x\in \mathbb{R}^n; u_{nh}(x)=1\}$ and $\Gamma_{nh}^h=\partial Q_{nh}^h$, then $Q_{nh}^h=R_{nh}^h$ for all $n$ and so 
    \[\lim_{h\to 0}\bigcup_{n=0}^{T/h}(\Gamma_{nh}^h\times \{nh\})=\bigcup_{t\geq 0}(\Gamma_{t}\times \{t\})\]
\end{proof}
This main result, in the game-theoretic context, says that the boundaries of the strategic communities evolve according to the mean curvature flow in the limit as time steps become arbitrarily short and players interact with a vanishingly small neighborhood around them. This proof demonstrates this in the Cauchy setting, but we can also demonstrate this in a bounded domain with the more sophisticated machinery of \cite{Ishii1994,Ishii1999,Ishii2001}.

\begin{theorem}[MBR replicates Free Boundary MCF in $\Omega\subset \mathbb{R}^n$.] \label{thm:FreeBoundary}Consider a domain $\Omega\subset\mathbb{R}^n$ which has a $C^2$ boundary. Consider the strategy profile $u_0:\Omega\to \{0,1\}$ so that $R_0=\{x\in \Omega;u_0(x)=1\}$ is $C^2$, compact, and sufficiently well-behaved so that the hypotheses of Lemma \ref{lem:empty interior} are satisfied. Let $\Gamma_0=\partial R_0$. Further, let $\cup_{t\geq 0}(\Gamma_t\times \{t\})$ be the unique solution to the \textit{Free Boundary Mean Curvature Flow} from $\Gamma_0$ on the interval $[0,T]$. If $\{u_{nh}\}_{n=0}^{T/h}$ is the MBR sequence of strategy profiles generated by $\mathcal{H}(h)$ with a kernel satisfying H1-H4 and appropriate scaling as in \eqref{eq:scale} starting from $u_0$, then 
 \[\lim_{h\to 0}\bigcup_{n=0}^{T/h}(\Gamma_{nh}^h\times \{nh\})=\bigcup_{t\geq 0}(\Gamma_{t}\times \{t\})\]
    In the Hausdorff sense, where $\Gamma_{nh}^h=\partial R_{nh}^h=\partial\{x\in \Omega;u_{nh}(x)=1\}$.
    
\end{theorem}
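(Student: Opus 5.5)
The plan follows the proof of Theorem \ref{thm:Cauchy}: first show that one step of MBR in $\Omega$ with kernel $K^{\sqrt h}$ is exactly one step of the generalized MBO operator $G_h$ of Lemma \ref{lem:convergence2}, then invoke that lemma to pass to the limit.

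\textbf{Step 1: one step of MBR is one step of MBO.} The new ingredient compared with the Cauchy case is that $K^{\sqrt h}(x-\cdot)$ no longer has mass $1$ on $\Omega$. Write $u_0=\chi_{R_0}$ and $m_h(x)=\int_\Omega K^{\sqrt h}(x-y)\,dy$. By \eqref{eq:mbr}, player $x$ best responds with strategy $1$ if and only if
\[
\int_\Omega K^{\sqrt h}(x-y)\chi_{R_0}(y)\,dy\;\geq\;\int_\Omega K^{\sqrt h}(x-y)\chi_{\Omega\setminus R_0}(y)\,dy .
\]
Since $\chi_{R_0}+\chi_{\Omega\setminus R_0}=1$ on $\Omega$, this is equivalent to
\[
\int_\Omega K^{\sqrt h}(x-y)\chi_{R_0}(y)\,dy\;\geq\;\tfrac12\, m_h(x).
\]
This is precisely the threshold in \eqref{eq:generalMBO}. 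The normalization remark in Subsection \ref{subsec:coordination} (rescaling by $\|K^{\sqrt h}(x-\cdot)\|_{L^1(\Omega)}$ does not change the game) shows this is the natural form.

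To match $G_h$, which acts on level-set functions rather than on sets, take a continuous $g_0$ with $\{g_0\ge 0\}=R_0$, for instance the signed distance $dd(x,\partial R_0)$ restricted to $\Omega$. Applying the displayed equivalence to each superlevel set, one step of $G_h$ sends the superlevel set $\{g\ge\lambda\}$ to the MBO image of that set. Taking $\lambda=0$ and inducting on $n$ then gives $\{G_h^n g_0\ge 0\}=R^h_{nh}$. This needs monotonicity of the scheme (larger sets produce larger updated sets), which holds because $K\ge 0$ by H1. Tie-breaking only affects the set where equality holds, and that set has empty interior by the sufficiently-well-behaved hypothesis together with Lemma \ref{lem:empty interior}; so ties do not change the boundaries.

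\textbf{Step 2: pass to the limit.} By Lemma \ref{lem:convergence2}, the piecewise-constant interpolants of $G_h^n g_0$ converge to the viscosity solution $v$ of \eqref{eq:levelset} with Neumann boundary data and initial zero set $\partial R_0$. Sato's uniqueness \cite{Sato1994} identifies $v$, and the zero sets of $v$ are the free boundary MCF $\Gamma_t$ on $[0,T]$. The no-fattening assumption (Lemma \ref{lem:empty interior}, transplanted to the Neumann setting) ensures that $\{v(\cdot,t)=0\}$ has empty interior. It follows that $\{v>0\}$, $\{v<0\}$ and $\{v=0\}$ are separated in the limit, as in the argument of Barles–Georgelin \cite{Barles1995}, and this yields Hausdorff convergence of $\bigcup_n \partial R^h_{nh}\times\{nh\}$ to $\bigcup_t\Gamma_t\times\{t\}$. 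Finally remove $\partial\Omega$ from the boundaries, consistent with the definition of $\Gamma_{nh}$.

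\textbf{Main obstacle.} The difficulty is Step 2, specifically upgrading the convergence of functions in Lemma \ref{lem:convergence2} to Hausdorff convergence of the interfaces up to $\partial\Omega$. Near $\partial\Omega$ the mass $m_h(x)$ degenerates to roughly $\tfrac12$ of its interior value, and the uniform estimates must be checked there. I would first try to reproduce the Barles–Georgelin barrier argument: use half-relaxed limits of $\chi_{R^h_{nh}}-\chi_{\Omega\setminus R^h_{nh}}$, show they are respectively a viscosity sub- and super-solution of the Neumann problem via Ishii's consistency estimates \cite{Ishii2001}, and conclude by the comparison principle together with the non-fattening hypothesis.
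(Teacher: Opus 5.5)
Your proposal is correct and is essentially the paper's proof. The same computation identifies one step of $\mathcal{H}(h)$ with one step of the Neumann generalized MBO operator $G_h$ through superlevel sets: you threshold at $0$ with $g_0=dd(\cdot,\partial R_0)$, where the paper thresholds at $\tfrac12$ with $g_0=dd(\cdot,\partial R_0)+\tfrac12$. The limit is then obtained by citing Lemma~\ref{lem:convergence2} together with uniqueness and non-fattening. The paper simply asserts, via \cite{Ishii2001} and \cite{Evans1991}, the passage to Hausdorff convergence of interfaces that you flag as the main obstacle.

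One aside does not hold up. You claim the tie set has empty interior ``by Lemma~\ref{lem:empty interior}'', but that lemma concerns the limiting level-set equation, not fixed $h$. The claim is also unnecessary: the $\geq$ convention already makes the identification with $G_h$ exact, and the paper handles other tie-breaking rules separately through the symmetry $u\mapsto 1-u$.
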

\begin{proof}
    As before, we will start by showing that one step of MBR is equivalent to a single step of the generalized MBO process in $\Omega$. Let $u_0$ be a strategy profile and let $R_0=\{x\in \Omega;u_0(x)=1\}$. Now we compute $\mathcal{H}(h)u_0$ and see that 
    \begin{equation*}
        \mathcal{H}(h)u_0=1\iff \int_\Omega K^{\sqrt{h}}(x-y)\chi_{R_0}(y)dy\geq\int_\Omega K^{\sqrt{h}}(x-y)\chi_{R_0^c}(y)dy 
    \end{equation*}
    Recall that, because relative payoff, not absolute payoff, determines the best response, we can adjust our Kernel to normalize its mass to 1 by dividing by $\|K^{\sqrt{h}}(x-\cdot)\|_{L^1(\Omega)}$. This gives us
    \begin{align*}
        \mathcal{H}(h)u_0(x)=1&\iff \frac{1}{\|K^{\sqrt{h}}(x-\cdot)\|_{L^1(\Omega)}}\int_\Omega K^{\sqrt{h}}(x-y)\chi_{R_0}(y)dy\geq \frac{1}{2}\\
        &\iff \int_\Omega K^{\sqrt{h}}(x-y)\chi_{R_0}(y)dy\geq \frac{1}{2}\int_\Omega K^{\sqrt{h}}(x-y)dy
    \end{align*}
    Now consider $G^h$ as in \eqref{eq:generalMBO}. 
\[G^hg(x)= \sup\left\{\lambda \in\mathbb{R};\int_\Omega K^{\sqrt{h}}(x-y)\chi_{g\geq \lambda}(y)dy\geq\frac{1}{2}\int_\Omega K^{\sqrt{h}}(x-y)dy\right\}.\]
and let $g(x)$ be any continuous function so that $g(x)\geq \frac{1}{2}$ for $x\in R_0$ and $g<\frac{1}{2}$ for $x\notin R_0$. Such a function always exists (e.g., $g(x)=dd(x,\partial R_0)+\frac{1}{2}$). Now observe that 
\begin{equation*}
    \begin{split}
        G^hg(x)\geq \frac{1}{2}&\iff\int_\Omega K^{\sqrt{h}}(x-y)\chi_{g\geq \frac{1}{2}}\geq\frac{1}{2}\int_\Omega K^{\sqrt{h}}(x-y)dy\\
        &\iff \int_\Omega K^{\sqrt{h}}(x-y)\chi_{R_0}\geq\frac{1}{2}\int_\Omega K^{\sqrt{h}}(x-y)dy\\
        &\iff \mathcal{H}(h)u_0(x)=1
    \end{split}
\end{equation*}

This means that if $g$ is any continuous function so that $g(x)\geq \frac{1}{2}\iff u(x)=1$ then $G^hg(x)\geq \frac{1}{2}\iff \mathcal{H}(h)u(x)=1$. Thus, by simple induction, if $u_{nh}$ is a sequence of strategy profiles generated by $\mathcal{H}(h)$ starting from $u_0$ and $g_{nh}$ is a sequence of continuous functions generated by $G^h$ starting from $g_0$, and if $g_0(x)\geq \frac{1}{2}\iff u_0(x)=1$, then $g_{nh}(x)\geq \frac{1}{2}\iff u_{nh}(x)=1$ for all $n$. Importantly, this implies that, as long as the level set $\{g_{nh}(x)=\frac{1}{2}\}$ does not develop an interior, then 
\[\left\{g_{nh}(x)=\frac{1}{2}\right\}=\partial\left\{u_{nh}(x)=1\right\}=\Gamma^h_{nh}\]

Recall our assumption that $u_0$ is such that $R_0=\{u_0=1\}$ is sufficiently well-behaved so that the solution to \eqref{eq:levelset} with initial data $g_0(x)=dd(x,\partial R)$ will not develop an interior as in Lemma \ref{lem:empty interior}. With this assumption, we need only note that, by Lemma \ref{lem:convergence2}, it is certain that the sequence $g_{nh}(x)$ generated by $G^h$ converges to a viscosity solution of \eqref{eq:levelset}. Namely,
\[\lim_{h\to 0}\bigcup_{n=0}^{T/h}(g_{nh}^h\times[nh,(n+1)h):=g\]
That viscosity solution, $g$, will not encounter the empty interior difficulty. Moreover, under these conditions, the strategic boundary $\Gamma_{nh}$ will also not develop an interior because it will conform to the level sets $\{g_{nh}(x)=\frac{1}{2}\}$ in the generalized MBO process, by the computation above. 

Now we conclude by saying that the level set of the viscosity solution $\{g(x,t)=\frac{1}{2}\}$ satisfies the free boundary mean curvature flow starting from $\{g(x,0)=\frac{1}{2}\}=\partial R_0=\Gamma_0$, which we call $\cup_{t\geq 0}(\Gamma_t\times\{t\})$. Thus, we have shown 
\begin{align*}
    \lim_{h\to 0}\bigcup_{n=0}^{T/h}(\Gamma_{nh}^h\times\{nh\})&=\lim_{h\to 0}\bigcup_{n=0}^{T/h}\left(\partial \left\{g_{nh}^h(x)=\frac{1}{2}\right\}\times[nh,(n+1)h)\right)\\
    &=\bigcup_{t\geq 0}\left(\left\{g(x,t)=\frac{1}{2}\right\}\times \{t\}\right)\\
    &=\bigcup_{t\geq 0}(\Gamma_t\times\{t\})
\end{align*}
The first equality is convergence in the Hausdorff sense. Note that, if $\Gamma_{nh}^h=\partial\{g_{nh}^h(x)=\frac{1}{2}\}$, then the Hausdorff distance between $\Gamma_{nh}^h\times\{nh\}$ and$\partial\{g_{nh}^h(x)=\frac{1}{2}\}\times [nh,(n+1)h)$ is exactly $h$ for any $n$. The second equality is the result of \cite{Ishii2001}, and the third equality comes from the geometric quality of solutions to \eqref{eq:levelset} as described by \cite{Evans1991}.
\end{proof}

This complete result shows that the strategic boundary in the continuous coordination game evolves under MBR (in a certain limiting sense) by Free Boundary Mean Curvature Flow. On its own, the result is interesting enough. From a geometer's perspective, we have constructed an elementary game and, from game-theoretic first principles, can replicate a fundamental geometric flow. There is a history of finding such game-theoretic equivalences (e.g. \cite{Gonzalvez2025,Kohn2006}), but ours is the simplest game in this category of which we are aware. Of course, the results that such games are equivalent to mean curvature flow imply that all the games themselves are equivalent in some sense. 

From a game-theoretic perspective, however, the equivalence between this fundamental game and this geometric flow allows us to answer game-theoretic questions with geometric answers. A discussion of such questions and answers follows in section \ref{sec:Implications}.

There are two points related to the main results that we must still address. The first is that having an empty interior does not necessarily mean the boundary has measure 0, so the strategic boundary may have an empty interior, but the set of players with degenerate best responses may not have measure zero. Although we conjecture it is possible to argue that the strategic boundary of measure zero is implied by properties of the strategic communities under these evolutions, it is simpler to note that every result listed and every proof given work identically for a strategy profile $u$ and its opposite $1-u$. The decision to say that $\mathcal{H}(h)u(x)=1\iff K^\epsilon *u\geq \frac{1}{2}$ instead of using a strict inequality is equivalent to giving a tie-breaking order where, if strategies $0$ and $1$ are both best responses, pick strategy $1$. Because we do the same process on $1-u$ and get the same evolution of the strategic boundary, we have shown that any tie-breaking order works equally well so long as the initial strategic boundary is sufficiently well-behaved. 

We must also point out that MBO and even the generalization of MBO can be extended to describe the evolution of multiple region boundaries by doing pairwise comparison of each pair of regions according to equations \eqref{eq:MBO} or \eqref{eq:generalMBO}. As an example, consider three pairwise disjoint regions $R_1, R_2$ and $R_3=\Omega \setminus R_1\setminus R_2$. With each of these is an associated function $g_1,g_2,$ and $g_3$ so that $g_i\geq \frac{1}{2}\iff x\in R_i$ for $i=1,2,3$. To extend the generalized MBO scheme to this setting, we would say that 
\begin{equation*}
    G^hg_i(x)=\min_{j\neq i}\sup\left\{\lambda \in \mathbb{R}; \int_\Omega K^{\sqrt{h}}(x-y)\chi_{g_i\geq \lambda}(y)dy \geq \int_\Omega K^{\sqrt{h}}(x-y)\chi_{g_j\geq \lambda}(y)dy\right\}
\end{equation*}

In this setting, it is even easier to see the equivalence between this and the best response function because it is a direct comparison between functions of the form $K*\chi_{R^i}$ and $K*\chi_{R^j}$. We can again show the equivalence as in the proof of \ref{thm:FreeBoundary}. Suppose $R_1,R_2$ and $R_3$ fill $\Omega$ and let $g_i(x)$ be continuous functions such that $g_i(x)\geq \frac{1}{2}\iff x\in R_i$. Now note that $G^hg_i(x)\geq \frac{1}{2}$ is equivalent to saying that for all $j \neq i$
\[\sup\left\{\lambda \in \mathbb{R}; \int_\Omega K^{\sqrt{h}}(x-y)\chi_{g_i\geq \lambda}(y)dy \geq \int_\Omega K^{\sqrt{h}}(x-y)\chi_{g_j\geq \lambda}(y)dy\right\}\geq \frac{1}{2}\]
This is again equivalent to saying that 
\[ \int_\Omega K^{\sqrt{h}}(x-y)\chi_{g_i\geq \lambda}(y)dy \geq \int_\Omega K^{\sqrt{h}}(x-y)\chi_{g_j\geq \lambda}(y)dy\quad \forall j\neq i\]
Now we recall that these convolutions are exactly the expressions for payoff, so we have shown that 
\[G^hg_i(x)\geq\frac{1}{2}\iff w(x,i|u)\geq w(x,j|u) \  \forall j\neq i \iff \mathcal{H}(h)u(x)=i\]

Thus we have shown that if $R_i$ are regions which fill $\Omega$ and $g_i$ are such that $g_i(x)\geq \frac{1}{2}\iff x\in R_i$ then $G^hg_i(x)\geq \frac{1}{2}\iff x\in \{x\in \Omega, \mathcal{H}(h)u(x)=i\}$. This is a slightly more complicated way of saying that the strategic boundaries of strategic community $i$, under $\mathcal{H}(h)$, will coincide with the level sets of $\{G^hg_i(x)=\frac{1}{2}\}$ and thus, even in the more general setting, MBR is equivalent to the generalized MBO scheme. We cannot at present use this to form an equivalence between MBR and a more general notion of mean curvature flow which allows for these boundary complexes, but we can show empirically that the strategic boundaries seem to move according to the free boundary network flow (e.g., Fig. \ref{fig:network}). In the figure, the strategic boundaries (the curves that separate the green, yellow, and cyan regions) seem to move according to MCF away from the junctions. This motion is similar to the Network flow of \cite{Imanen2019} except for the fact that curves in the simulation can meet and annihilate each other or join together. In the actual limit, this would not happen because curves only show this behavior when they come within $\sqrt{h}$ of each other and otherwise cannot coincide in finite time because of the avoidance principle. 
\begin{figure}
    \centering
    \includegraphics[width=0.8\linewidth]{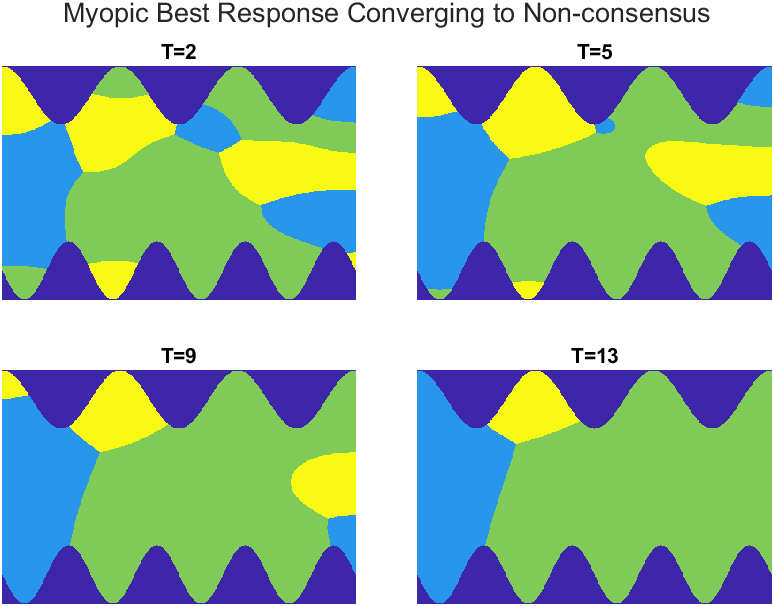}
    \caption{A strategy profile with three strategies evolving under MBR. }
    \label{fig:network}
\end{figure}

\section{Implications}\label{sec:Implications}

The most obvious implication of the main result is that Nash equilibria are minimal surfaces, which are stationary solutions to MCF or free-boundary MCF. This gives us an opportunity to do a complete classification of Nash equilibria given a particular domain. For instance, if the domain is a disk in \(\mathbb{R}^{2}\), the only minimal surfaces are diameters, and thus diameters are the only strategic boundaries that correspond to Nash equilibria. In a more general 2D domain which is strictly convex, still the only Nash equilibria correspond to diameters where they exist. These examples are rather trivial, but they show a general principle which is that from the geometry of the player domain alone, we can determine the kinds of Nash equilibria that are admitted.  

Remaining in the 2D case, we can use some results well understood from the theory of curve shortening flow to describe the dynamics of strategic communities under coordination. For instance, in curve shortening flow, the area enclosed by a curve changes at a rate proportional to the turning angle of the curve. That is, if a curve entirely encloses an area, the turning angle of the curve is one complete rotation ($2\pi$) and the enclosed area will shrink at a rate of $2\pi$ units of area per time. This allows one to exactly compute the time of extinction for a strategic community based only on its size and its intersection with the domain boundary. An interesting point about this is that, if a strategic community is entirely surrounded by another strategic community, under MBR the surrounded strategic community will disappear after a length of time which depends only on the initial area and not the initial shape. 

A last point to this same effect is that if a domain has at least three strategic communities but no two strategic boundaries intersect initially, then no two strategic boundaries will ever intersect, at least in the limiting process as $h\to 0$. This result is equivalent to the well-known avoidance principle for curve shortening flow. If no two strategic boundaries intersect, then the results above still hold for each of the strategic boundaries individually, and so each will move according to the mean curvature flow. Because two surfaces evolving under MCF that do not initially intersect cannot intersect at any future time, we know that the strategic boundaries will never intersect. This, of course, is only valid in the limit as $h\to 0$ because (as seen in Figure \ref{fig:network}) for any positive $h$, if strategic boundaries come within $\sqrt{h}$ of one another, they may combine into a single strategic boundary or annihilate one another and leave behind no strategic boundary.  If boundary 1 separates strategy $a$ from strategy $b$ and boundary 2 separates strategy $b$ from strategy $c$, the contact of these boundaries will result in a single boundary between strategy $a$ and $c$. However, if boundary 1 separates strategy $a$ from $b$ and boundary 2 separates strategy $b$ from $a$ in the opposite order, the contact of these curves results in no boundary. The fact that these behaviors depend on the actual strategy profiles and not just the geometry of the strategic boundary is proof that without the limit, this game cannot be understood entirely geometrically. However, even for positive $h$, as long as boundaries are at least a distance of $\sqrt{h}$ away from one another, the behavior of the boundary curves will at least approximate the curve shortening flow.

It may be of interest to some, especially those studying applications of dynamic game theory in ecology or economics, to consider the stability of Nash equilibria. To borrow from the evolutionary ecological vocabulary, a convergent stable equilibrium (CS-equilibrium) is a Nash equilibrium, $u$, such that all nearby strategy profiles will converge to $u$ under a certain strategy revision protocol, which, in our case, is MBR. This is similar to the concept of a Convergence Stable Strategy, a refinement of the more well-known Evolutionary Stable Strategy (ESS),\cite{Apaloo2009,MaynardSmith1973}. However, the spatially explicit nature of the strategy profiles in question here makes the terms CSS and ESS inappropriate. These CS-equilibria are the only relevant equilibria in many application settings because a system that is at a Nash equilibrium but not a CS-equilibrium will fall away from equilibrium after a small perturbation. CS-equilibria, in our case, correspond to strategic boundaries which, if perturbed by some small distance in $C^2$, will return to the original strategic boundary under mean curvature flow. 

This equivalence means that, in a 2D convex domain, there are no Convergent stable Equilibria. Every Nash equilibrium can be perturbed in such a way that the strategic boundary moves away from the original strategic boundary. Indeed, every Nash equilibrium can be perturbed in such a way that one strategic community goes extinct in finite time. For this reason, stable coexistence is only possible in non-convex domains. 

Lastly, we look ahead to future work and consider what this system might look like if one strategy offered a better payoff than the other (e.g., Figure \ref{fig:bias}). 
\begin{figure}
    \centering
    \includegraphics[width=0.7\linewidth]{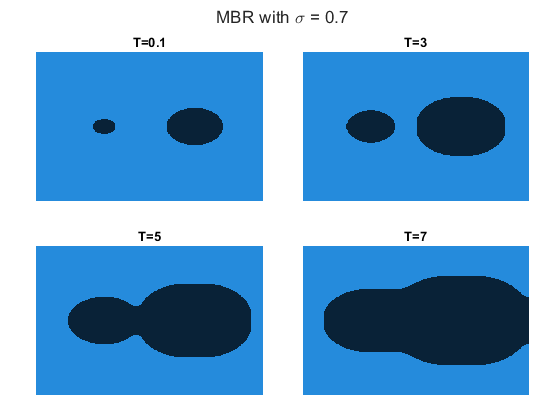}
    \caption{Myopic best response in the setting where strategy 1 (in dark blue) is favored over strategy 2 (in light blue). The fronts move outward at a constant speed until a singularity forms, at which point the strategic boundary smooths in a manner like curve shortening flow.}
    \label{fig:bias}
\end{figure} 
In the system described in section \ref{sec:modelBackground}, the payoff matrix for each pairwise interaction is the identity matrix. However, if instead the payoff matrix were a diagonal matrix with entries $\sigma_1,...,\sigma_m$, players may adopt a new strategy even if it is not the most popular strategy in its neighborhood. In the simplest case with two strategies, let the payoff matrix be
\[\begin{bmatrix}
    1&0\\0&\sigma
\end{bmatrix}\]
The example in Figure \ref{fig:bias} shows strategy 1 in dark blue with a payoff of $1$ and strategy 2 in light blue with a payoff of $\sigma = 0.7$. 

If we try to examine the geometric properties of this flow through the same equivalence as before, we can see that MBR is equivalent to the following MBO-type operation
\begin{align*}\tilde G^hg_1(x)&=\sup\left\{\lambda\in \mathbb{R};\int_{\Omega}K^{\sqrt{h}}(x-y)\chi_{g\geq\lambda}(y)dy\geq \frac{1}{1+\sigma}\int_\Omega K^{\sqrt{h}}(x-y)dy\right\}
\end{align*}
Sequences generated by $\tilde{G}^h$ do not converge to solutions of \eqref{eq:levelset} as $h\to 0$, and so it is certain that the motion of the strategic boundaries in this case will not move according to mean curvature. Worse still, numerical experiments suggest that the motion of the boundaries would depend on $\epsilon$ in such a way that no limit exists (see figures \ref{fig:convergence1} and \ref{fig:convergence2}). When the curvature of a strategic boundary is far smaller than $\frac{1}{\sqrt{h}}$ (as is always the case in the limit), the strategic boundary moves with approximately constant speed, which depends on $\epsilon$ in the direction of the less favorable strategy. 
\begin{figure}[ht]
    \centering
    \includegraphics[width=0.8\linewidth]{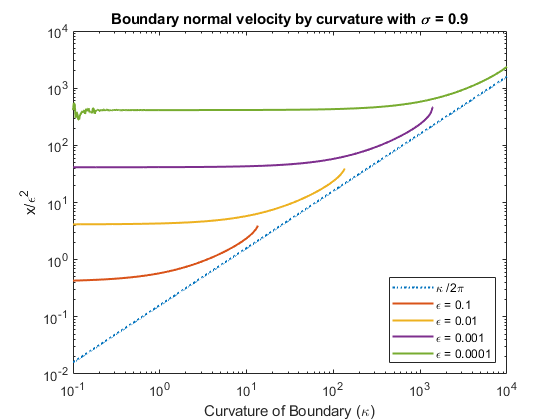}
    \caption{The numerically approximated speed of the boundary depending on the curvature of the boundary. When $\sigma<1$, strategy 1 is more favorable. Boundary curves always move faster in the outward direction than they would in the case where $\sigma = 1$ (marked in the dotted blue line). }
    \label{fig:convergence1}
\end{figure}
\begin{figure}[ht]
\centering
    \includegraphics[width=0.8\linewidth]{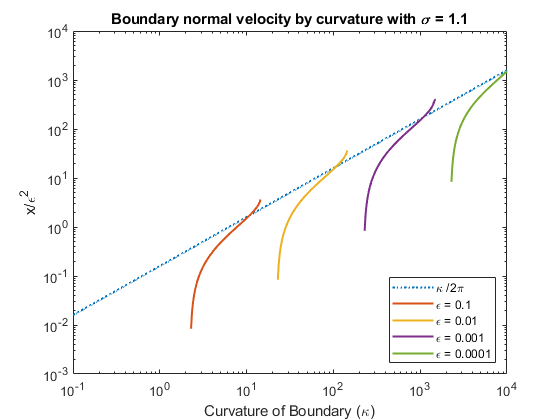}
    \caption{The numerically approximated speed of the boundary depending on the curvature of the boundary. When $\sigma>1$, strategy 1 is less favorable. The boundary curve moves more slowly in the outward direction than it would in the case where $\sigma =1$. It may even move in the inward direction even though the curvature vector points outward.}
    \label{fig:convergence2}
\end{figure}
This falls far short of a proof, but it gives us the numerical suggestion that the geometric properties of coordination are not generic. From the numerical experiments like those depicted in Figure \ref{fig:bias}, it appears as though the strategic boundaries move in a way more similar to front propagation with constant outward velocity (see Figure \ref{fig:convergence1}); only in the places where singularities form or where the strategic boundary forms a sharp angle with the domain boundary does motion by mean curvature describe the flow. Readers who are interested in simulating these types of flows can access them in the repository linked in the code availability statement of this manuscript, which is cited here as \cite{McAlister2026Code}. Further research must be done to understand the ways that the geometric properties of the neutral coordination game can be extended to a more general class of games.

Although this manuscript does not present a great technical achievement (the only real innovation is the equivalence between equations \eqref{eq:mbr},\eqref{eq:MBO}, and \eqref{eq:generalMBO}), it has shown a great connection between two fundamental elements of two very separate fields of mathematics. This result allows us to use results from one to guide inquiry in the other. Most crucially, this connection allows us to describe coordination (at least in a certain limit) through geometric properties alone, which helps us understand the relationship between relational structure and game-theoretic outcome.  

\section*{Acknowledgments} This work was done with advice from Dr. Tadele Mengesha, Dr. Nina H. Fefferman, and Dr. Theodora Bourni.
\section*{Declarations}
\subsection*{Funding} While completing this work, JSM was supported by NSF grants DBI
2412115 and DBI 2622265 as part of the US NSF Center for Analysis and
Prediction of Pandemic Expansion (APPEX).

\subsection*{Conflict of interest/competing interests} The authors have no competing interests to declare

\subsection*{Code availability} The MATLAB code used to simulate myopic best response, generate figures, and examine convergence numerically is included in the following repository linked below and cited as \cite{McAlister2026Code}.

\noindent \href{https://github.com/jmcalis/JSM_2026_DualCoordination/releases/tag/v1.1.0}{https://github.com/jmcalis/JSM\_2026\_DualCoordination/releases/tag/v1.1.0} 

\subsection*{Author contribution}
Both authors contributed to the study conception and analysis. Simulations and visualizations were written and performed by John S. McAlister. Both authors contributed to the writing of the initial draft.

\bibliographystyle{alpha}
\bibliography{refs.bib}
\end{document}